        \def\@thefnmark{\null}
        \def\footnotetexta{\@footnotetext}
\title{Constructive methods for spectra with three nonzero elements in the nonnegative inverse eigenvalue problem}
\author{Anthony G. Cronin\footnote{School of Mathematics and Statistics, University College Dublin, Ireland,  anthony.cronin@ucd.ie}}
\date{}
\newtheorem{Theo}{Theorem}
\newtheorem{defn}[Theo]{Definition}
\newtheorem{Pro}[Theo]{Proposition}
\newtheorem{Rem*}[Theo]{Remark}
\newcommand{\npmatrix}[1]{\begin{bmatrix} #1 \end{bmatrix}}
\def\bbbc{\mathpalette{}{{\setbox0=\hbox{$\rm C$}\hbox{\hbox
to0pt{\kern0.4\wd0\vrule height0.9\ht0\hss}\box0}}}}
\begin{document}

\maketitle

\begin{abstract}
%\footnotetexta{{\textit{This research was supported by Science Foundation Ireland SFIRFP}.}} 
\noindent
We present and compare three constructive methods for realizing non-real spectra with three nonzero elements in the nonnegative inverse eigenvalue problem. \\
We also provide some necessary conditions for realizability and numerical examples. In particular, we utilise the companion matrix.\\
\\
\emph{AMS Subject Classification:} 15A18; 15A29; 47A75; 58C40 \\
\emph{Keywords:} Nonnegative matrices; Inverse eigenvalue problem; Spectral theory
\end{abstract}

\section{Introduction}
%In section 1 we prove our main theorem in a number of steps. In section 2 we give a consequence of the result with regard to a question of Guo \cite{}
We say that a list of $n$ complex numbers $\sigma =(\lambda_1, \lambda_2, \ldots, \lambda_n)$ is \emph{realizable} if $\sigma$ is the list of eigenvalues (including multiplicities) of an $n \times n$ entry-wise nonnegative matrix $A$ and that $\sigma$ \emph{realizes} $A$ or that $A$ is a realizing matrix for $\sigma$.\\
Let $$s_k:=\lambda_1^k+\lambda_2^k+\cdots+\lambda_n^k, \ k = 1, 2, 3, \ldots$$
Then 
\begin{align}
s_k=tr(A^k)\geq 0, \ k = 1, 2, 3, \ldots
\end{align}
is a necessary condition for $\sigma$ to be realizable.\\
The JLL conditions discovered independently by Johnson \cite{Jo} and Loewy and London \cite{LL} state that
\begin{align} \label{JLL}
n^{m-1}s_{km}\geq s_k^m, \ k,m = 1, 2, 3, \ldots
\end{align}
A further necessary condition for realizability of $\sigma$ comes from the Perron-Frobenius theorem \cite{Ber}, that is that there
exists $j$ with $\lambda_j$ real and $\lambda_j\geq |\lambda_i |$, for all $i$. Such a $\lambda_j$ is called the \emph{Perron root} of $\sigma$, usually denoted $\rho$.
The nonnegative inverse eigenvalue problem (or NIEP) asks for necessary and sufficient conditions
on $\sigma$ in order that $\sigma$ be the spectrum of an entry-wise nonnegative $n \times n$ matrix. 

The same problem in which we may augment the list $\sigma$ by adding an arbitrary number $N$
of zeros was solved by Boyle and Handelman \cite{B/H}. Using dynamical systems theory and ergodic theory, they proved the remarkable result that if

\begin{enumerate}[(a)]
 \item $\sigma$ has a Perron root $\lambda_1 > |\lambda_j |$ (\text{all } $j > 1$) and
\item $s_k\geq 0$ for all positive integers $k$ (and $s_m = 0$ for some $m$ implies $s_d=0$ for all positive divisors $d$ of $m)$
\end{enumerate}
then
$$\sigma_N := (\lambda_1,\ldots,\lambda_n,0,\ldots, 0) \  (N \text{ zeros added})$$
is realizable for all sufficiently large $N$.\\
In this paper we compare three constructive techniques for realizing non-real spectra with exactly three non-zero elements. Section 2 examines the companion matrix, gives a surprising result for a realizing companion matrix and provides some examples and remarks. Section 3 examines a construction due to Laffey \cite{Laf3}. Finally in section 4 we compare three constructive methods including a multi-block companion matrix in an attempt to minimize the dimension of realizing matrices with three non-zero element spectra.
%&&&&&&&&&&&&&&&&&&&&&&&&&&&&&&&&&&&&&&&&&&&&&&&&&&&&&&&&&&&&&&&&&&&
\section{Known constructions}
While Boyle and Handelman's result was an existence one, in recent years more attention has been paid to trying to find a constructive realizing matrix for given spectra \cite{Laf3}. \\
This paper outlines three such constructions and compares each in turn. Our first construction involves the \emph{companion matrix}. 
\begin{defn}
If $f(x)=x^n+p_1x^{n-1}+p_2x^{n-2}+\cdots+p_n,$ then the companion matrix of $f(x)$ is
$$A(f)=\npmatrix{ 0 & 1 & 0 & \ldots & 0  \cr 0 & 0 & 1 & \ddots & \vdots \cr
\vdots & \vdots & \ddots & \ddots & 0 \cr 0 & 0 & \ldots & 0 & 1 \cr -p_n &
-p_{n-1} & \ldots & -p_2 & -p_1 \cr}.$$
\end{defn}
Note that the companion matrix has the property that det$(xI-A(f))=f(x)$.\\
Clearly, $A(f)$ is nonnegative if and only if $p_i \leq 0$ for $i=1,\ldots,n.$\\

It is known that spectra of the type $\sigma=(\rho, a\pm ib)$ where $b \neq 0$, $\rho \geq | a+ ib|$, are realizable if and only if they are realizable by a matrix of the form $\alpha I_3+C$ where $\alpha=\frac{\rho+2a}{3}\geq 0$, $I_3$ is the $3 \times 3$ identity matrix and $C$ is a nonnegative trace zero companion matrix. \\
This was first proved by Loewy and London in \cite{LL}.

\pagebreak
However this construction does not hold for $n=4$ with spectra of the form $\sigma=(\rho,a\pm ib, \mu)$ where $\mu \neq 0$, since for example the list $(4,2+i,2-i,3)$ is realizable by 
$$ \left(
  \begin{array}{ c c c c }
     \frac{8}{3} & 1 & 0 & 0   \\
      0 & \frac{8}{3} & 1 & 0  \\
	\frac{52}{27} & \frac{1}{3} & \frac{8}{3} & 0 \\
	0 & 0 & 0 & 3
  \end{array} \right)
$$
but the list $\sigma-\frac{11}{4}=(\frac{5}{4},\frac{-3}{4}+i,\frac{-3}{4}-i,\frac{1}{4})$ is not realizable by $C$ where $C$ is a nonnegative trace zero companion matrix since

\begin{eqnarray*}
f(x)&=&\Big(x-\frac{5}{4}\Big)\Big(x-\frac{1}{4}\Big)\left(\Big(x+\frac{3}{4}\Big)^2+1\right) \\
&=&x^4-\frac{3x^2}{8}-\frac{15x}{8}+\frac{125}{256}
\end{eqnarray*}
has its constant term positive and so the companion matrix of $f(x)$ is not nonnegative.\\
However for $\mu=0$ we do have the following:
%&&&&&&&&&&&&&&&&&&&&&&&&&&&&&&&&&&&&&&&&&&&&&&&&&&&&&&&&&&&&&&&&&&&
\begin{Pro}
% Prop 2
Suppose $\sigma=(\rho, a \pm ib, 0)$ is realizable, then $\sigma$ is realizable by $\alpha I_4 +C$ where $\alpha \geq 0$ and $C$ is a nonnegative trace zero companion matrix if and only if $\rho \geq 2a$.
\end{Pro}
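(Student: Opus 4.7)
My approach is to parametrize the candidate realizers of the claimed form and reduce the statement to sign conditions on the coefficients of a single quartic polynomial. Write $A=\alpha I_4+C$, so that $C$ is the companion matrix of some monic $f(x)=x^4+p_1x^3+p_2x^2+p_3x+p_4$. Since the trace of $A$ is $s_1=\rho+2a$ and $C$ is required to have trace zero, $\alpha$ is forced to equal $(\rho+2a)/4$, which is $\geq 0$ because $\sigma$ is realizable and hence $s_1\geq 0$. The eigenvalues of $C$ are then $\rho-\alpha$, $(a-\alpha)\pm ib$, and $-\alpha$, and the problem reduces to deciding when each of $p_2,p_3,p_4$ is $\leq 0$.

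I would next expand $f(x)=(x-(\rho-\alpha))(x+\alpha)\bigl[(x-(a-\alpha))^2+b^2\bigr]$ and read off the coefficients by Vieta. Two of them are handled immediately: $p_1=0$ by the choice of $\alpha$, and $p_4=-\alpha(\rho-\alpha)[(a-\alpha)^2+b^2]\leq 0$ because $\alpha\geq 0$ and $\rho-\alpha=(3\rho-2a)/4\geq 0$ by the Perron bound $\rho\geq |a|$. A direct computation then gives
\[p_3=-\tfrac{\rho-2a}{2}\cdot Q,\qquad Q=(a-\alpha)^2+b^2+\alpha(\rho-\alpha)>0,\]
so $p_3\leq 0$ is equivalent to $\rho\geq 2a$; this already delivers the only-if direction.

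For the converse the remaining task is $p_2\leq 0$. A short calculation using Newton's identities (or direct expansion of the product above) yields
\[p_2=b^2-\tfrac{1}{8}\bigl(3\rho^2-4a\rho+4a^2\bigr),\]
and rearranging $p_2\leq 0$ gives exactly $4s_2\geq s_1^2$, which is the JLL inequality \eqref{JLL} with $n=4$, $k=1$, $m=2$. Since $\sigma$ is assumed realizable, this JLL inequality holds, so $p_2\leq 0$ automatically. Combined with $\rho\geq 2a$, every coefficient $p_i$ is $\leq 0$, so $C$ is a nonnegative trace-zero companion matrix and $A=\alpha I_4+C$ realizes $\sigma$.

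The step I expect to be the crux is recognising that the upper bound on $b^2$ needed for $p_2\leq 0$ coincides exactly with an instance of JLL; neither $s_2\geq 0$ nor the Perron bound $\rho^2\geq a^2+b^2$ is, on its own, strong enough to imply it, so without this observation one would be able to establish the only-if direction but not the converse. The remainder is routine polynomial algebra once $\alpha$ has been pinned down by the trace-zero requirement.
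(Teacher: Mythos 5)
Your proposal is correct and follows essentially the same route as the paper: the trace-zero requirement pins $\alpha=(\rho+2a)/4$, the constant term is automatically nonpositive, the $x^2$-coefficient is nonpositive precisely because of the JLL inequality $4s_2\geq s_1^2$ (which holds since $\sigma$ is realizable as a $4$-element list), and the $x$-coefficient, equal to $-\tfrac{1}{8}(\rho-2a)(\rho^2+4b^2)$, yields the condition $\rho\geq 2a$ in both directions. The only cosmetic difference is that you read the coefficients off by Vieta where the paper phrases them via Newton's identities as $-\tfrac{s_2}{2}$ and $-\tfrac{s_3}{3}$.
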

\begin{proof}

Let 
\begin{eqnarray*}
S_k&=&\rho^k+(a+ib)^k+(a-ib)^k, \text{ and} \\
s_k&=&(\rho-\alpha)^k+(a-\alpha+ib)^k+(a-\alpha-ib)^k+(-\alpha)^k, \text{ for } k=1,2,\ldots
\end{eqnarray*}
where $\alpha=\frac{\rho+2a}{4}$. \\
Then $s_1=0$, and by the Newton identities \cite{Newton}, the characteristic polynomial of $C$ is
$$x^4-\frac{s_2}{2}x^2-\frac{s_3}{3}x+d$$
where $d=det(C)=-(\rho-\alpha)\left((a-\alpha)^2+b^2\right)\alpha \leq 0$.\\

Note $s_2=\frac{4S_2-S_1^2}{4}\geq 0$ (using $\alpha=\frac{\rho+2a}{4}$), by \eqref{JLL} for $n=4,m=2,k=1$.\\
So $C\geq 0$ if and only if $s_3\geq0$.\\
But $s_3=\frac{3}{8}(\rho-2a)(\rho^2+4b^2)$.\\
Hence $\sigma$ is realizable as $\alpha I_4+C$ if and only if $\rho \geq 2a$.
\end{proof}
%&&&&&&&&&&&&&&&&&&&&&&&&&&&&&&&&&&&&&&&&&&&&&&&&&&&&&&&&&&&&&&&&&&&
\pagebreak

\subsection{Examples}
One can easily find examples of the form $(\rho, a\pm ib)$ with $\rho <2a$, which are realizable. \\
For example $(12,9\pm i)$ is realizable by 
$$ A=\left(
  \begin{array}{ c c  c }
     10 & 1 & 0 \\
     0 & 10 & 1 \\
     4 & 2 & 10	
  \end{array} \right)
$$
and so $(12,9\pm i,0)$ is realizable by the $4\times 4$ nonnegative matrix $A\oplus 0_1$.\\
%If $\sigma=(\rho, a \pm ib, 0)$ is an extreme spectrum and $A$ is a realizing matrix, then $A$ has at most three nonzero entries on its diagonal, so $3tr(A^2)\geq(tr(A))^2$ (see the remark in the introductory chapter following the JLL inequalities) and therefore $(\rho, a \pm ib)$ is realizable.\\
Note that we can also find lists $(\rho, a \pm ib)$ such that $(\rho, a \pm ib,0)$ is not realizable but $(\rho, a \pm ib,0,0)$ is realizable. The example $\sigma=(21, 8 \pm 12i,0)$ is not realizable since it fails the JLL condition $4s_2-s_1^2\geq 0$. In fact $4s_2-s_1^2=-245$. \\
However, $\sigma_1=(21, 8 \pm 12i,0,0)$ is realizable, for example, by $\alpha I_5+C$ where $\alpha=\frac{37}{5}$ and $C$ is the trace zero companion matrix of the polynomial 
$$x^5-\frac{18x^3}{5}-\frac{9892x^2}{25}-\frac{2532243x}{125}-\frac{335969028}{3125}.$$
Similarly $\sigma=(5,2\pm 3i)$ has $4s_2-s_1^2=4(15)-9^2=-21<0$ and $5s_2-s_1^2=-6$ (while $6s_2-s_1^2>0$) but $\sigma$ with three zeros added is realizable by $\frac{3}{2}I_6+C$ where $C$ is the nonnegative $6\times 6$ trace zero companion matrix of
$$x^6-\frac{3x^4}{4}-2x^3-\frac{1197x^2}{16}-\frac{351x}{2}-\frac{6993}{64}.$$
%&&&&&&&&&&&&&&&&&&&&&&&&&&&&&&&&&&&&&&&&&&&&&&&&&&&&&&&&&&&&&&&&&&&
\subsection{Remarks}
\emph{\underline{Additional remarks on non-real spectra with exactly three non-zero entries}}\\

1. Suppose that $\sigma=(\rho, a \pm ib)$, where $\rho, a, b$ are real, $i=\sqrt{-1}$, $b>0$ and $\rho \geq \sqrt{a^2+b^2}$.\\
Write $s_k=\rho^k+(a+ib)^k+(a-ib)^k$. \\
If $a \leq 0$, then $\sigma$ with $N$ zeros appended is realizable if and only if $(N+3)s_2\geq s_1^2$
where $N$ is the smallest positive integer with this property. In this case $\sigma$ is realizable by $\alpha I_{N+3}+C$ where $\alpha=\frac{\rho+2a}{N+3}$ and $C$ is a nonnegative $(N+3) \times (N+3)$ companion matrix with trace zero \cite{LafSmi}. \\
%&&&&&&&&&&&&&&&&&&&&&&&&&&&&&&&&&&&&&&&&&&&&&&&&&&&&&&&&&&&&&&&&&&&

2. Suppose $a>0$. By replacing $\sigma$ by $\sigma'=(k\rho, ka \pm ikb)$ where $k=\frac{1}{\mid a^2+b^2\mid}^\frac{1}{2}$, we may assume that $a^2+b^2=1$, since $X$ is a realizing matrix for $\sigma$ if and only if $kX$ is a realizing matrix for $\sigma'$. So assume $a>0$, $\rho \geq 1$ and $\sigma=(\rho, a \pm ib)$ where $b>0$ and $a^2+b^2=1$.\\
We may write $a=\cos\theta$, $b=\sin\theta$ where $0 \leq \theta < \frac{\pi}{2}$. Then $\sigma$ with sufficiently many zeros added is the spectrum of a nonnegative matrix if and only if $\rho^k+2\cos k\theta >0$ for all positive integers $k$ by Boyle and Handelman \cite{B/H}. Because of the periodicity of the cosine function, this condition can be checked in finitely many steps. In particular, when $\theta=\frac{\pi}{l}$ for some positive integer $l$, let $\rho>\rho_0=max_{0\leq k<\lfloor\frac{l}{2}\rfloor+1}(2\cos\frac{k\pi}{l})^{\frac{1}{l-k}}$. 

\pagebreak
If $\sigma$ is realized by an $M\times M$ nonnegative matrix then the following  JLL condition must hold
$$M^{l-k-1}(\rho^{l-k}-2\cos\frac{k\pi}{l})\geq (\rho+2\cos\frac{\pi}{l})^{l-k}.$$ 
Choosing $\rho-\rho_0$ to be a sufficiently small positive number, we can realize such $\sigma$ by an $M\times M$ nonnegative matrix with the minimum dimension $M$ satisfying this JLL inequality.\\
%&&&&&&&&&&&&&&&&&&&&&&&&&&&&&&&&&&&&&&&&&&&&&&&&&&&&&&&&&&&&&&&&&&&

3. Suppose that $\sigma=(\rho, a \pm ib)$, where $a>0, b>0, a^2+b^2=1$, is realizable with sufficiently many zeros added. One can ask whether $\sigma$ (with sufficiently many zeros added) is realizable by $\alpha I_N+C_N$ where $\alpha>0$ and $C_N$ is a nonnegative $N \times N$ trace zero companion matrix for some positive integer $N$. When $N=3$, it is well-known (\cite{LL}) that this realization is possible if and only if $\rho \geq a+b\sqrt{3}$ (since $s_1=0$ here). \\
When $N=4$, Proposition 2 says $\rho \geq 2a$ is necessary and sufficient.\\
We now present:

%&&&&&&&&&&&&&&&&&&&&&&&&&&&&&&&&&&&&&&&&&&&&&&&&&&&&&&&&&&&&&&&&&&&
\begin{Pro}
% Prop 3
Suppose that $\sigma=(\rho, a \pm ib)$, $a>0,b>0, a^2+b^2=1$, is realizable with sufficiently many zeros added. If $\sigma$ (with sufficiently many zeros added) is realizable by $\alpha I_N+C_N$ where $C_N$ is a nonnegative $N \times N$ trace zero companion matrix for all sufficiently large $N$, then $\rho \geq 2a$ (so $\sigma$ with only one zero added is realizable). 
\end{Pro}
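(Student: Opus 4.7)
I would prove the contrapositive: assuming $\rho < 2a$, I will exhibit a single coefficient of the characteristic polynomial of $C_N$ that becomes positive for all sufficiently large $N$, which contradicts the nonnegativity of $C_N$.

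First, with $\alpha = (\rho+2a)/N$ enforcing $\mathrm{tr}(C_N)=0$, the eigenvalues of $C_N$ are $\rho-\alpha$, $a-\alpha\pm ib$, and $-\alpha$ with multiplicity $N-3$. The power sums $s_k := \mathrm{tr}(C_N^k)$ then satisfy $s_1 = 0$, and for each fixed $k \ge 2$ the term $(N-3)(-\alpha)^k$ is $O(N^{1-k})$ while the other three contributions tend to $\rho^k$ and $(a\pm ib)^k$. Writing $a = \cos\theta$, $b = \sin\theta$ (so $a^2+b^2=1$), this gives $\lim_{N\to\infty} s_k = \rho^k + 2\cos k\theta$ for every $k \ge 2$.

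Next I would single out the coefficient $c_4$ of $x^{N-4}$ in the characteristic polynomial. Since $s_1 = 0$, Newton's identities collapse to $c_4 = -\tfrac{1}{4}s_4 + \tfrac{1}{8}s_2^2$. The main computation is to pass to the limit $N \to \infty$ and simplify, using $\cos 2\theta = 2a^2-1$, $\sin^2 2\theta = 4a^2 b^2$, and $\cos 4\theta = 2\cos^2 2\theta - 1$, to obtain
\[
\lim_{N\to\infty} c_4^{(N)} \;=\; -\frac{(\rho-2a)(\rho+2a)(\rho^2+4b^2)}{8}.
\]

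Since $\rho+2a > 0$ and $\rho^2+4b^2 > 0$, the sign of the limit is opposite to that of $\rho-2a$. If $\rho < 2a$ the limit is strictly positive, so $c_4^{(N)} > 0$ for all sufficiently large $N$; this forces the companion matrix $C_N$ to have a positive coefficient, contradicting the hypothesis that $C_N$ is nonnegative for all sufficiently large $N$. Hence $\rho \ge 2a$. The main obstacle is choosing the right coefficient: the limits of $c_2$ and $c_3$ only recover Boyle--Handelman-type conditions $\rho^k + 2\cos k\theta \ge 0$ and do not detect the threshold $\rho = 2a$, while $c_4$ is the first coefficient whose limiting expression factors through $(\rho-2a)$, mirroring the factorization $s_3 = \frac{3}{8}(\rho-2a)(\rho^2+4b^2)$ that drives the $N=4$ case in Proposition 2.
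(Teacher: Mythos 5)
Your proposal is correct and follows essentially the same route as the paper: both isolate the coefficient of $x^{N-4}$, use Newton's identities with $s_1=0$ to write it as $-\tfrac14 s_4+\tfrac18 s_2^2$, and show its sign is governed by $(\rho-2a)(\rho+2a)(\rho^2+4b^2)$ for large $N$ (the paper expands exactly in $N$ and reads off the leading coefficient, while you pass to the limit $N\to\infty$, which is an equivalent and slightly cleaner formulation). Your limiting value agrees with the paper's expansion, so no gap.
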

\begin{proof}

Let $\sigma_N=(\rho,a+ib,a-ib,\underbrace{0,\ldots,0}_{\rm N-3 \ zeros} \}$
.\\
%e^{\pm i\theta}, 0,0,\ldots,0)$\\
Let $\alpha=\frac{\rho+2a}{N}$, and $S_k=tr (C_N)^k$, so
\begin{eqnarray*}
S_k&=&(\rho-\alpha)^k+(a-\alpha+ib)^k+(a-\alpha-ib)^k+(-\alpha)^k+(-\alpha)^k+\cdots+(-\alpha)^k.
\end{eqnarray*}
In order that $\sigma$ with $N-3$ zeros added be realizable in the desired form for all sufficiently large N, we require $C_N$ to be nonnegative i.e. the coefficients $P_j$ must be non-positive for \\
$j=1,2,\ldots N$, in det$(xI-C_N)$
\begin{eqnarray*}
&=&x^N+P_1x^{N-1}+P_2x^{N-2}+\cdots+P_N \\
&=& (x-\tau)(x^2-\frac{2\left((N-2)a -\rho\right)}{N}x+1)(x+\frac{\rho+2a}{N})^{N-3}
\end{eqnarray*}
where $\tau=\frac{(N-1)\rho-2a}{N}$. \\
The coefficient of $x^{N-1}$ is zero since $C_N$ has trace 0.\\
Consider the coefficient of $x^{N-4}$.\\
%$C_N$ is a trace zero companion matrix. 
%So $P_1=0$ and by the Newton identity \cite{Newton} $P_1+S_1=0$.\\
%The characteristic polynomial of $C_N$ is
The second Newton identity \cite{Newton} states that $S_2+S_1P_1+2P_2=0$, but $S_1=0$ and so we have that $P_2=-\frac{S_2}{2}$.
%$S_2=tr (C_N)^2=\left(\frac{(N-1)\rho-2a}{N}\right)^2+2\big( \big( \frac{(N-2)a-\rho}{N}\big)^2 - sin^2(\theta) \big)+ (N-3)(\frac{\rho+2a}{N})^2$.
\\ 
%$S_4=tr (C_N)^4$ and 
The fourth Newton identity states that $S_4+S_3P_1+S_2P_2+S_1P_3+4P_4=0$ which simplifies to $S_4-\frac{S_2^2}{2}+4P_4=0$.\\
%and thus we require $S_4-\frac{S_2^2}{2}\geq 0$.
Hence the coefficient $P_4$ of $x^{N-4}$ in det$(xI-C_N)$ is $-\frac{1}{4}\left(S_4-\frac{S_2^2}{2}\right)$ and for realizability by a companion matrix, for sufficiently large $N$, this coefficient must not be positive i.e. $\left(S_4-\frac{S_2^2}{2}\right)$ must be nonnegative. 

\pagebreak
Expanding $\left(S_4-\frac{S_2^2}{2}\right)$ we get $\frac{(2a+\rho)(N-3)}{2N^3}(f(N))$ where $f(N)$ is a quadratic polynomial in $N$ in terms of $\rho, a$ and $b$ which equals \\
$$(\rho-2a)(\rho^2+4b^2)N^2+((4\rho a^2+8\rho b^2 +16ab^2-(3\rho^3+2\rho^2a+8a^3))N+12\rho^2a+24\rho a^2+2\rho^3+16a^3.$$
The leading coefficient $(\rho-2a)(\rho^2+4b^2)$ is nonnegative when $\rho \geq 2a$. \\
But from Proposition 2 this is exactly the condition for $\sigma \cup \{0\}$ to be realizable in the desired form and this establishes the result.
% $\rho^3-2\rho^2a+4\rho b^2-8ab^2$ factors as 

%To prove the Proposition, put
%\begin{eqnarray*}
%f(x)&=&(x-\rho)(x-a-ib)(x-a+ib) \\
%&=&(x-\rho)(x^2-2ax+1) \\
%&=& x^3-(\rho+2a)x^2+(1+2a\rho)x-\rho.
%\end{eqnarray*}
%Let $N$ be a positive integer and put
%\begin{eqnarray*}
%g(x)&=&x^{N-3}f(x) \\
%&=&x^N-(\rho+2a)x^{N-1}+(1+2a\rho)x^{N-2}-\rho x^{N-3}. 
%\end{eqnarray*}
%Put $x=y+\frac{\rho+2a}{N}$ and
%\begin{eqnarray*}
%G(y)&=&g(x) \\
%&=&(y+\frac{\rho+2a}{N})^N-(\rho+2a)(y+\frac{\rho+2a}{N})^{N-1}+(1+2a\rho)(y+\frac{\rho+2a}{N})^{N-2}-\rho (y+\frac{\rho+2a}{N})^{N-3}.
%\end{eqnarray*}
%For realizability of $\sigma$ with $N-3$ zeros added, we require the coefficients of the powers $y^j$, $0\leq j < N$, in $G(y)$ to be non-positive. For $k\geq 2$, a straightforward but lengthy inductive argument leads to the following result:

%The coefficient of $y^k$ in $G(y)$ is
%$$-\frac{k-1}{k!}[\prod_{j=3}^{N-k+1}(N-j)]S_N(k),$$
%where $S_N(k)=H(k)N^2+K(k)N+2(\rho+2a)^3$, \\
%$H(k)=\rho^3+8a^3+\rho k^2 - 3\rho k-2ak+12\rho a^2-4\rho a^2k+6a\rho^2-2a\rho^2k,$ and \\
%$K(k)=-(\rho+2a)(12a^2+3\rho^2-2k-4a\rho k+12a\rho).$ \\
%In order that $\sigma$ with $N-3$ zeros added be realizable in the desired form for all sufficiently large $N$, we require $H(k)\geq 0$, for all $N\geq k \geq 2.$ \\
%However $H(4)=(\rho - 2a)(\rho^2+4-4a^2) \geq 0$ only if $\rho \geq 2a$. But from Proposition 2 this is exactly the condition for $\sigma \cup \{0\}$ to be realizable in the desired form.
\end{proof}
%&&&&&&&&&&&&&&&&&&&&&&&&&&&&&&&&&&&&&&&&&&&&&&&&&&&&&&&&&&&&&&&&&&&
\begin{Rem*}
The negativity of the coefficient of $x^{N-4}$ in Proposition 3 turns out to be the most restrictive condition in establishing the result. This proposition demonstrates that when adding lots of zeros to $\sigma$ to aid realizability, the method $\alpha I_N + C_N$ for nonnegative companion matrices $C_N$ is not the best strategy. \\ 
%y^{N-5}, y^{N-6}$ and $y^{N-7}$ 
The realizability by matrices of the form (see the next section)
$$X_N=\npmatrix{ x_1 & 1 & 0 & \ldots & & 0  \cr x_2 & x_1 & 2 & 0 &\ldots &   0 \cr
x_3 & x_2 & x_1 & 3 & 0  & 0 \cr \vdots & \ddots & \ddots & \ddots & \ddots \cr x_{N-1} &
x_{N-2} & \ldots & x_2 & x_1 &  N-1 \cr x_N & x_{N-1} & \ldots & x_3 & x_2 &  x_1}$$
is possible for all sufficiently large $N$, but leads to very high dimensional realizations. 
\end{Rem*}
%&&&&&&&&&&&&&&&&&&&&&&&&&&&&&&&&&&&&&&&&&&&&&&&&&&&&&&&&&&&&&&&&&&&

\section{Laffey's construction}
In \cite{Laf3} Laffey gives a constructive method for the Boyle-Handelman theorem. He shows that if  $\tau=(\mu_1,\ldots, \mu_n),$ 
\begin{eqnarray*}
x_k:&=&\mu_1^k+\cdots+\mu_n^k, \ k = 1, 2, 3,\ldots,\\
 q(x):&=&(x-\mu_1)\cdots(x-\mu_n)\\
&=&x^n+q_1x^{n-1}+q_2x^{n-2}+\cdots+q_{n-1}x+q_n
\end{eqnarray*}
then the matrix
$$X_n=\npmatrix{ x_1 & 1 & 0 & \ldots & & 0  \cr x_2 & x_1 & 2 & 0 &\ldots &   0 \cr
x_3 & x_2 & x_1 & 3 & 0  & 0 \cr \vdots & \ddots & \ddots & \ddots & \ddots & \vdots \cr x_{n-1} &
x_{n-2} & \ldots & x_2 & x_1 &  n-1 \cr x_n & x_{n-1} & \ldots & x_3 & x_2 &  x_1}$$
has characteristic polynomial
$$ Q (x)=x^n +nq_1x^{n-1}+n(n-1)q_2x^{n-2} +\cdots+n!q_n.$$
Thus it follows that the spectrum of $Q(x)$ is realizable if the $x_i, \ (i=1,2,\ldots,n),$ are nonnegative.
Hence if we wish to realize a given spectrum $\sigma=(\lambda_1,\lambda_2,\ldots,\lambda_n)$, let
$$q(x)=x^n+q_1x^{n-1}+q_2x^{n-2}+\cdots+q_{n-1}x+q_n$$ with
$$q_i=\frac{p_i}{n(n-1)\cdots(n-i+1)}  \text{ for } i=1,2,\ldots,n$$
then $q(x)$ has its corresponding 
\begin{eqnarray*}
Q(x)=f(x)&=&x^n+p_1x^{n-1}+\cdots+p_{n-1}x+p_n \\
&=&(x-\lambda_1)\cdots(x-\lambda_n).
\end{eqnarray*}
Laffey gives a lower bound for $N$, the number of zeros required to be added for realizability, in the proof of his main result \cite{Laf3} depending on a number of parameters.\\
This bound is not optimal in general however. \\
For example the list $\sigma=(1.1, e^{\pm i \theta})$ for $\theta=0.0188\pi$ cannot be realized by a $3\times3$ matrix by Remark 3 of Section 2.2 since $\rho \ngeq a+b\sqrt{3}$. \\This list with one zero added can however be realized by the nonnegative $4 \times 4$ matrix  
$$ \left(
  \begin{array}{ c c c c }
     1.013005334 & 1 & 0 & 0   \\
      0 & 1.041605274 & 1 & 0  \\
	0 & 0 & 1.041605274 & 1 \\
	0.000326227 & 0 & 0 & 0.000296825
  \end{array} \right).
$$
This construction arises from the $4 \times 4$ solution of the NIEP via examination of the coeffiecients of the characteristic polynomial of realizing matrices given in \cite{Spain}.

Thus $\sigma \cup \{0\}$  is realizable whereas the minimum number of zeros $N$ required for realizability in Laffey's construction is $N=198$. The condition that all the main diagonal entries of $X_n$ are equal is quite restrictive and allowing these entries to be any nonnegative entries that add up to the sum of the elements in $\sigma$ is less restrictive in this sense. Here cos$(\theta)=99825635046\cdots$ and note that realization becomes more difficult when the spectral gap i.e. the distance between the Perron root and the second biggest eigenvalue - decreases.
%&&&&&&&&&&&&&&&&&&&&&&&&&&&&&&&&&&&&&&&&&&&&&&&&&&&&&&&&&&&&&&&&&&&
\section{Three constructive methods}

Suppose that $\sigma=(\rho, \lambda_2, \overline{\lambda_2})$, where $\rho >|\lambda_2|$ and $\lambda_2,\overline{\lambda_2}$ are non-real complex conjugates, and that all the power sums $s_k>0$. By the Boyle-Handelman theorem \cite{B/H}, there exists a positive integer $N$ such that $\sigma \cup N$ zeros is realizable. In this section, we consider finding such an $N$ and a corresponding realizing matrix for $\sigma \cup N$ zeros. \\
\\
%&&&&&&&&&&&&&&&&&&&&&&&&&&&&&&&&&&&&&&&&&&&&&&&&&&&&&&&&&&&&&&&&&&&
\emph{\underline{Method 1: $\alpha I+C$}}\\
As we saw in the previous sections one method is to try to realize  $\sigma \cup N$ zeros by $\alpha I_m +C,$ where $m=N+3, \alpha \geq 0$, and $C$ is a nonnegative trace zero companion matrix. \\
For instance, in the Examples section we saw that for $\sigma=(5,2+3i,2-3i)$, $\sigma\cup 3$ zeros is realizable in this way and that $(5,2+3i,2-3i,0,0)$ is not realizable. So this method yields the best possible result in this case. \\
However, this method need not always work.\\
For example, let $\sigma=(1.4,e^{\pm i\theta})=(\rho,a+ib, a-ib)$ where $\cos \theta=0.95$ (note that $\rho \ngeq 2a)$. Then 
\begin{align*}
 f(x)=(x-1.4)(x^2-1.9x+1)\\=x^3-3.3x^2+3.66x-1.4\\ =x^3-(\rho+2a)x^2+(1+2a\rho)x-\rho
\end{align*}

If $x^Nf(x)$ is to be realizable by $\alpha I_m+C$, where $m=N+3$, then for $x=y+\frac{2a+\rho}{N+3}$, the polynomial 
\begin{align*}
g(y)=\left(y+\frac{2a+\rho}{N+3}\right)^{N+3}-(\rho+2a)\left(y+\frac{2a+\rho}{N+3}\right)^{N+2}\\+(1+2a\rho)\left(y+\frac{2a+\rho}{N+3}\right)^{N+1}-\rho\left(y+\frac{2a+\rho}{N+3}\right)^N 
%g(t)=
%\left(1+\frac{3.3t}{N+3}\right)^{N+3}-(3.3)t\left(1+\frac{3.3t}{N+3}\right)^{N+2}\\+3.66t^2\left(1+\frac{3.3t}{N+3}\right)^{N+1}-1.4t^3\left(1+\frac{3.3t}{N+3}\right)^N 
\end{align*}

must have all its coefficients non-positive. \\
Using the substitution $y\rightarrow \frac{1}{t}$ and multiplying $g(y)$ by $t^{N+3}$ we get
\begin{align*}
\left(1+\frac{2a+\rho}{N+3}t\right)^{N+3}-(\rho+2a)t\left(1+\frac{2a+\rho}{N+3}t\right)^{N+2}\\+(1+2a\rho)t^2\left(1+\frac{2a+\rho}{N+3}t\right)^{N+1}-\rho t^3\left(1+\frac{2a+\rho}{N+3}t\right)^N. 
\end{align*}
We note that the companion matrix of $F(x)=x^Nf(x)$ is nonnegative if and only if $f(t)=t^{N+3}F(\frac{1}{t})$ has all its coefficients non-positive. Expanding this expression we see that at least one of the coefficients of $t$ is positive.\\
Note that if $\cos\theta<0$ then $\sigma \cup N$ zeros is realizable for the smallest nonnegative integer $N$ satisfying $(N+3)s_2\geq s_1^2$, see \cite{LafSmi}.\\
\\
%&&&&&&&&&&&&&&&&&&&&&&&&&&&&&&&&&&&&&&&&&&&&&&&&&&&&&&&&&&&&&&&&&&&
\emph{\underline{Method 2: $X_m$}}\\
Laffey \cite{Laf3} showed that when $\lambda_1=\rho>|\lambda_j|, j>1$ for $\sigma=(\rho, \lambda_2,\ldots,\lambda_n)$ and all $s_k>0,$ there exists $N\geq 0$ such that $\sigma \cup N$ zeros is realizable by an $m \times m$ matrix of the form
$$X_m=\npmatrix{ x_1 & 1 & 0 & \ldots & & 0  \cr x_2 & x_1 & 2 & 0 &\ldots &   0 \cr
x_3 & x_2 & x_1 & 3 & 0  & 0 \cr \vdots & \ddots & \ddots & \ddots & \ddots \cr x_{m-1} &
x_{m-2} & \ldots & x_2 & x_1 &  m-1 \cr x_m & x_{m-1} & \ldots & x_3 & x_2 &  x_1}$$
with $m=N+3$.\\
The $x_j$ are recovered inductively from the equations $trX_m^k=s_k, k=1,2,3,\ldots,m.$\\
For example, for $\sigma=(1.4,e^{\pm i\theta})$ where $\cos \theta=0.95,$ this method succeeds with $N=9$.\\
The minimum $N$ for which $\sigma\cup N$ zeros is realizable is not known in this case; $N=9$ is the minimum for which realizability by a matrix of the form $X_m$ is possible. While this method is systematic and easy to carry out, it is not guaranteed to yield the minimum $N$ for which $\sigma\cup N$ zeros is realizable however.\\
\\
%&&&&&&&&&&&&&&&&&&&&&&&&&&&&&&&&&&&&&&&&&&&&&&&&&&&&&&&&&&&&&&&&&&&
\emph{\underline{Method 3: Multi-block companion matrices}}\\
Another method which may be attempted, is to realize $\sigma \cup N$ zeros by a matrix of the form\\

$$M=\LARGE{ \left(
\begin{array}{c | c | c | c | c }
C_1 & N_1 & 0   & 0  & 0  \\ \hline
0   & C_2 & N_2 & 0 &   0 \\ \hline
\vdots & \ddots & \ddots & \ddots & \vdots \\ \hline
0 & 0 & 0 & \ddots & N_{r-1}   \\ \hline 
R_1 & R_2 & \cdots & R_{r-1} & C_r    
\end{array}\right)}$$
\\

where 
\begin{itemize}
\item 
the matrix blocks $C_i$ are nonnegative companion matrices of size $k_i \times k_i$ for $i=1,2,\ldots,r\geq 1$
\item the matrices $N_i$ are size $k_i \times k_{i+1}$ (for $i=1,2,\ldots,r-1$), which have their $(k_i,1)$ entry equal to one and all other entries equal to zero and 
\item the matrix $R_i$ is the $k_r \times k_i$ matrix whose rows are all equal to zero, except the last row which is equal to $(r_{i1} \ r_{i2} \ \cdots \ r_{ik_i})$. 
\end{itemize}
%[Examples of such matrices of small dimension are discussed in the \emph{sufficient conditions} section of chapter one and again in chapter four in \emph{the connection} section.] 
Such matrices were first studied by Laffey \cite{Laf2}, and by Laffey and Smigoc \cite{LafSmi4}, in work on spectra with at most two positive elements and all other elements having negative real parts. They can also be employed in obtaining realizations over the real numbers of the spectra realized by Kim, Ormes and Roush \cite{Kim}.
Unfortunately there is no known ``good'' algorithm for choosing $r$ or the companion matrices $C_i$. \\
A result (Theorem 5) of Laffey, Loewy and Smigoc in \cite{LafLoeSmi}, implies that if $\sigma=(\lambda_1,\lambda_2,\overline{\lambda_2})$ satisfies the hypotheses given earlier in this section, i.e.\
\begin{enumerate}
\item $s_k(\sigma)>0$
\item  $\lambda_1>|\lambda_2|$
\end{enumerate}
then if
$$\widetilde{f}(t)=(1-\lambda_1t)(1-\lambda_2t)(1-\overline{\lambda_2}t)$$ we have that $1-\widetilde{f}(t)^{\frac{1}{N}}$ has positive coefficients for sufficiently large integers $N$. \\
The $N$ arising in the proof is the minimum $N$ for which $\sigma \cup (N-3)$ zeros is realizable by a matrix of the form $X_m$ above.\\
Writing $\widetilde{f}(t)^{\frac{1}{N}}=1-l_1t-l_2t^2- \cdots - l_Nt^N- \cdots$ where $l_i \geq 0,$ one choice is to take $r=N$, $C_i$ the companion matrix of 
$$e(x):=x^N-l_1x^{N-1}-l_2x^{N-2}-\cdots - l_N \text{ for } i=1,2,\ldots ,N$$
and then determine the entries in the last row of $M$ in order that $det(xI-M)=x^{N^2-3}f(x)$ where $f(x)=(x-\lambda_1)(x-\lambda_2)(x-\overline{\lambda_2}).$\\
This method succeeds precisely when the entries in position 
$$(N^2,1),(N^2,2),\ldots,(N^2,N^2-N)$$ turn out to be nonnegative. This can be done algorithmically as follows:\\

%&&&&&&&&&&&&&&&&&&&&&&&&&&&&&&&&&&&&&&&&&&&&&&&&&&&&&&&&&&&&&&&&&&&
\emph{\underline{The Algorithm}}\\
Calculate the remainder $r_1$ of $x^{N^2-3}f(x)$ upon division by $e(x)$, then the quotient $q_1$ of $x^{N^2-3}f(x)$ upon division by $e(x)$, then the remainder $r_2$ of $q_1$ upon division by $e(x)$, the quotient $q_2$ of $q_1$ upon division by $e(x)$, and in general, for $i>1$, $r_i$ the remainder $r_i$ of $q_{i-1}$ upon division by $e(x)$ and the quotient $q_i$ of $q_{i-1}$ upon division by $e(x)$ concluding with $r_{N-1}$ the remainder of $q_{N-2}$ upon division by $e(x)$. \\
The entries in position $(N^2,j), j=1,2,\ldots,N^2-N$ are the minus of the coefficients of the powers of $x^i$ $(i=0,1,\ldots,N-1)$ occurring in $r_u, u=1,2,\ldots, N-1$.\\

For the example given earlier of $f(x)=x^3-3.3x^2+3.66x-1.4$ corresponding to $\sigma=(1.4,e^{\pm i\theta})$ where $\cos \theta=0.95$, we find that $1-\widetilde{f}(t)^{\frac{1}{4}}$ has the first four coefficients positive (in fact it can be shown that all the coefficients of the Taylor series expansion of $1-\widetilde{f}(t)^{\frac{1}{4}}$ are positive) and the corresponding $16 \times 16$ matrix $M$ is found to have nonnegative entries. So one obtains a realization of $\sigma$ with $13$ zeros added in this case. \\
While in this example, the realization of $\sigma$ by a matrix of type $X_N$ requires fewer (nine) zeros, it is unknown which method provides the best bound for other given $\sigma$.\\

\pagebreak
For methods 2 and 3 mentioned above, we have that for $\cos(\theta)=0.95$\\
$\sigma_9=(1.4,e^{\pm i\theta},0,0,0,0,0,0,0,0,0)$ is realizable by
$$ X_{12}=\left(
  \begin{array}{ c c c c c c c c c c c c }
     x_1 & 1 & 0 & 0 & 0 & 0 & 0 & 0 & 0 & 0 & 0 & 0     \\
      x_2 & x_1 & 2 & 0 & 0 & 0 & 0 & 0 & 0 & 0 & 0 & 0    \\
	x_3 & x_2 & x_1 & 3 & 0 & 0 & 0 & 0 & 0 & 0 & 0 & 0   \\
	x_4 & x_3 & x_2 & x_1 & 4 & 0 & 0 & 0 & 0 & 0 & 0 & 0  \\
      x_5 & x_4 & x_3 & x_2 & x_1 & 5 & 0 & 0 & 0 & 0 & 0 & 0  \\
      x_6 & x_5 & x_4 & x_3 & x_2 & x_1 & 6 & 0 & 0 & 0 & 0 & 0  \\
      x_7 & x_6 & x_5 & x_4 & x_3 & x_2 & x_1 & 7 & 0 & 0 & 0 & 0  \\
      x_8 & x_7 & x_6 & x_5 & x_4 & x_3 & x_2 & x_1 & 8 & 0 & 0 & 0  \\ 
      x_9 & x_8 & x_7 & x_6 & x_5 & x_4 & x_3 & x_2 & x_1 & 9 & 0 & 0  \\
      x_{10} & x_9 & x_8 & x_7 & x_6 & x_5 & x_4 & x_3 & x_2 & x_1 & 10 & 0  \\
      x_{11} & x_{10} & x_9 & x_8 & x_7 & x_6 & x_5 & x_4 & x_3 & x_2 & x_1 & 11  \\
      x_{12} & x_{11} & x_{10} & x_9 & x_8 & x_7 & x_6 & x_5 & x_4 & x_3 & x_2 & x_1  \\
\end{array}
 \right)$$
where 
\begin{eqnarray*}
x_1&=&\frac{11}{40}, \ x_2=\frac{71}{3,520}, \ x_3=\frac{777}{704,000}, \ x_4=\frac{33,371}{929,280,000}, \\ 
x_5&=&\frac{8,251}{12,390,400,000}, \ x_6=\frac{1,171,069}{3,271,065,600,000}, \ x_7=\frac{231,739,033}{1,962,639,360,000,000}, \\ 
x_8&=&\frac{20,078,111,833}{863,561,318,400,000,000}, \ x_9=\frac{120,886,554,859}{34,542,452,736,000,000,000}, \\ x_{10}&=&\frac{807,900,538,537}{1,823,841,504,460,800,000,000}, \ x_{11}=\frac{8,197,245,662,72}{165,803,773,132,800,000,000,000}, \\ 
x_{12}&=&\frac{1,344,264,039,555,553}{267,496,753,987,584,000,000,000,000} 
\end{eqnarray*}
and $\sigma_{13}=(1.4,e^{\pm i\theta},0,0,0,0,0,0,0,0,0,0,0,0,0)$ is realizable by
$$ M_{16}=\left(
  \begin{array}{ c c c c c c c c c c c c c c c c }
     0 & 1 & 0 & 0 & 0 & 0 & 0 & 0 & 0 & 0 & 0 & 0 & 0 & 0 & 0 & 0    \\
      0 & 0 & 1 & 0 & 0 & 0 & 0 & 0 & 0 & 0 & 0 & 0 & 0 & 0 & 0 & 0   \\
	0 & 0 & 0 & 1 & 0 & 0 & 0 & 0 & 0 & 0 & 0 & 0 & 0 & 0 & 0 & 0  \\
	a & b & c & d & 1 & 0 & 0 & 0 & 0 & 0 & 0 & 0 & 0 & 0 & 0 & 0 \\
0 & 0 & 0 & 0 & 0 & 1 & 0 & 0 & 0 & 0 & 0 & 0 & 0 & 0 & 0 & 0 \\
0 & 0 & 0 & 0 & 0 & 0 & 1 & 0 & 0 & 0 & 0 & 0 & 0 & 0 & 0 & 0 \\
0 & 0 & 0 & 0 & 0 & 0 & 0 & 1 & 0 & 0 & 0 & 0 & 0 & 0 & 0 & 0 \\
0 & 0 & 0 & 0 & a & b & c & d & 1 & 0 & 0 & 0 & 0 & 0 & 0 & 0 \\
0 & 0 & 0 & 0 & 0 & 0 & 0 & 0 & 0 & 1 & 0 & 0 & 0 & 0 & 0 & 0 \\
0 & 0 & 0 & 0 & 0 & 0 & 0 & 0 & 0 & 0 & 1 & 0 & 0 & 0 & 0 & 0 \\
0 & 0 & 0 & 0 & 0 & 0 & 0 & 0 & 0 & 0 & 0 & 1 & 0 & 0 & 0 & 0 \\
0 & 0 & 0 & 0 & 0 & 0 & 0 & 0 & a & b & c & d & 1 & 0 & 0 & 0 \\
0 & 0 & 0 & 0 & 0 & 0 & 0 & 0 & 0 & 0 & 0 & 0 & 0 & 1 & 0 & 0 \\
0 & 0 & 0 & 0 & 0 & 0 & 0 & 0 & 0 & 0 & 0 & 0 & 0 & 0 & 1 & 0 \\
0 & 0 & 0 & 0 & 0 & 0 & 0 & 0 & 0 & 0 & 0 & 0 & 0 & 0 & 0 & 1 \\
r_{11} & r_{12} & r_{13} & r_{14} & r_{21} & r_{22} & r_{23} & r_{24} & r_{31} & r_{32} & r_{33} & r_{34} & a & b & c & d \\
  \end{array} \right)
$$
where 
\begin{eqnarray*}
a&=&\frac{171,081}{4,096,000}, \ b=\frac{6,487}{128,000}, \ c=\frac{339}{3,200}, \ d=\frac{33}{40}, \\
r_{11}&=&\frac{51,022,761,666,057,454,968,319,563}{35,184,372,088,832,000,000,000,000,000}, \ r_{12}=\frac{874,779,856,035,649,816,558,659}{274,877,906,944,000,000,000,000,000}, \\
r_{13}&=&\frac{46,738,794,894,222,413,568,447}{6,871,947,673,600,000,000,000,000}, \ r_{14}=\frac{3,037,818,107,230,325,277,507}{85,899,345,920,000,000,000,000}, \\
r_{21}&=&\frac{43,164,064,793,619,336,981}{1,073,741,824,000,000,000,000}, \ r_{22}=\frac{9,443,904,869,138,337}{209,715,200,000,000,000
}, \\ 
r_{23}&=&\frac{9,378,192,467,558,799}{167,772,160,000,000,000}, \ r_{24}=\frac{212,079,653,123}{1,310,720,000,000}, \\
r_{31}&=&\frac{1,115,284,117,329}{8,388,608,000,000}, \ r_{32}=\frac{14,100,563,727}{131,072,000,000}, \\ 
r_{33}&=&\frac{1,488,555,033}{16,384,000,000}, \ r_{34}=\frac{39,083,751}{204,800,000}.
\end{eqnarray*}

\section{Conclusion}
In this paper we give a necessary and sufficient condition for a non-real list with three non-zero numbers to be realized by a companion matrix. We also offered a comparison of three constructive methods in realizing non-real spectra with exactly three nonzero elements.
While the least $N$ in \cite{Laf3} is still unknown, the methods here may help in finding such $N$, particularly in the case where $\sigma$ contains just three nonzero elements. The fact that the case of just \textit{three} non-zero element spectra does not yield much more progress than in the general NIEP indicates just how intriguing this problem remains. The problems discussed in this paper while very special, can serve as a fertile testing ground for new constructive techniques in the NIEP. 

\section{Acknowledgements} The author is very grateful for the hospitality he received at the Technion (under the Erasmus Mundus Action 2 EMAIL III mobility programme) during the completion of this paper and would like to sincerely thank Professor Tom Laffey and Professor Raphi Loewy for helpful conversations on this paper and ideas to improve its readability. Thanks also to the anonymous referee who gave this paper a very considered treatment and provided helpful comments.
 
\pagebreak

\end{document}